\def\dfrac#1#2{\displaystyle{\begingroup\;#1\;\endgroup\over\;#2\;}} %
\def\fracf#1{\{ #1 \}} %
\def\intervalobig#1#2{\bigl( #1, #2 \bigr)} %
\def\intervalco#1#2{[#1, #2)} %
 \definecolor{c350}{rgb}{0.924,0.090,0.000}
 \definecolor{c450}{rgb}{0.472,0.090,0.000}
 \definecolor{c560}{rgb}{0.090,0.842,0.000}
 \definecolor{c120}{rgb}{0.99,0.5,0.000}
 \definecolor{c123}{rgb}{0.618,0.236,0.924}
 \definecolor{c134}{rgb}{0.618,0.924,0.472}
 \definecolor{c145}{rgb}{0.786,0.472,0.090}
 \definecolor{c156}{rgb}{0.618,0.090,0.842}
 \definecolor{c167}{rgb}{0.618,0.842,0.326}
 \definecolor{c178}{rgb}{0.618,0.326,0.972}
 \definecolor{c189}{rgb}{0.618,0.972,0.562}
 \definecolor{c235}{rgb}{0.236,0.924,0.090}
 \definecolor{c050611}{rgb}{0.090,0.708,0.894}
\newtheorem{theorem}{Theorem}%
\newtheoremstyle{named}{}{}{\itshape}{}{\bfseries}{.}{.5em}{\thmnote{#3's }#1}
\theoremstyle{named}
\newtheorem{remark}{Remark}
\theoremstyle{definition}
\theoremstyle{remark}
\theoremstyle{conclusion}
\theoremstyle{observation}
\DeclarePairedDelimiterX{\inp}[2]{\langle}{\rangle}{#1, #2}
\def\dfrac#1#2{\displaystyle{\begingroup\;#1\;\endgroup\over\;#2\;}} %
\def\fracf#1{\{ #1 \}} %
\def\intervalobig#1#2{\bigl( #1, #2 \bigr)} %
\def\intervalco#1#2{[#1, #2)} %
\def\comment#1{{}}
\begin{document}
\title{A concise geometric proof of the three distance theorem%
}

\author{Tadahisa Hamada%
}

\date{}

\maketitle

\begin{abstract}
The \emph{three distance theorem} states that for any given irrational number $\alpha$ and a natural number $n$, when the interval $( 0, 1 )$ is divided into $n+1$ subintervals by integer multiples of $\alpha$, namely, $\{0\}, \{ \alpha \}, \{ 2\,\alpha \}$, $\dots$, $\{ n\,\alpha \}$, then each subinterval is limited to at most three different lengths. Steinhaus conjectured this theorem in the 1950s, and many researchers have given various proofs since then.

This paper aims to improve the perspective by showing a two-dimensional map which tells how the unit interval is divided by continuously changing $\alpha$, and provide a concise proof of the theorem.
By illustrating this proof through geometric visualizations, we offer a clearer and more intuitive understanding of the underlying principles and relationships. The approach not only reinforces the classical results but also paves the way for new insights and applications in the study of irrational numbers and their properties.
Additionally, we present a simple proof of the three gap theorem, which is a dual of the three distance theorem.
\end{abstract}
\begin{description}
\item[Keywords]{
three distance theorem, three gap theorem, Farey sequence, Farey pair}
\item[AMS Subj. Class]{11J70, 11J71}
\end{description}

\section{Introduction}
\label{intro}

The historical accounts suggest that Pythagorean school discovered that the sounds produced by strings resonate harmoniously when the ratio of string lengths is $3 : 2$ %
 (for example,
\cite{iamblichus1965vita}, \cite{burkert1972lore}, %
\cite{vanderWaerden1943-VANDHD-2}).
Building on this, we can observe the fact that the three distance theorem can be naturally derived by further exploring the findings from the proportional relationships behind harmonic intervals as follows.

Starting with the note $F$ and successively multiplying the string length by $2/3$  (equivalent to multiplying frequency by $3/2$), and doubling it when less than half the original length, we generate the Pythagorean temperament: $F \to C \to G \to D \to A \to E \to B$. Figure \ref{Pythagorean temperament} illustrates how this temperament subdivides an octave in frequency. 
By multiplying $\log_2 3$ by rank numbers $1, 2, 3, \dots$, and marking the fractional parts (since 1 and 2 correspond to 0 and 1 on a logarithmic scale with base 2), we observe the formation of the Pythagorean temperament.

\begin{figure}[h]
\centering
\begin {tikzpicture}[x=12cm,y=10.3cm]
 \foreach \n in {0,...,5}{
 \draw (0,-\n/7)--(1,-\n/7); \draw (0,-\n/7-0.01)--(0,-\n/7+0.01); \draw (1,-\n/7-0.01)--(1,-\n/7+0.01);
 \draw (-0.01, -\n/7-0.025) node[red]{0}; \draw (1.01, -\n/7-0.025) node[red]{1};
 \draw (0.585,-\n/7-0.01)--(0.585,-\n/7+0.01); \draw (0.585, -\n/7+0.03) node{1};
}
 \foreach \n in {0}{
 \draw[dashed](0,-\n/7-0.01)--(0.585,-\n/7-0.01); \draw (0.585/2, -\n/7-0.03) node{0.585}; 
 \draw (0, -\n/7+0.06) node{2 notes};
}

 \foreach \n in {0,1}{
   \draw(1-0.415/2, -\n/7-0.03)node{0.415};
}

 \foreach \n in {1,...,5}{
 \draw (0.170,-\n/7-0.01)--(0.170,-\n/7+0.01); \draw (0.170, -\n/7+0.03) node{2};
}
 \foreach \n in {1}{
   \draw[dashed](0.170,-\n/7-0.01)--(1,-\n/7-0.01);
 \draw (0, -\n/7+0.06) node{3 notes};
}
 \foreach \n in {1,...,3}{
 \draw (0.170/2, -\n/7-0.03) node{0.170}; 
}

 \foreach \n in {2,...,5}{
 \draw (1-0.245,-\n/7-0.01)--(1-0.245,-\n/7+0.01); \draw (1-0.245, -\n/7+0.03) node{3};
 \draw (0.170*2,-\n/7-0.01)--(0.170*2,-\n/7+0.01); \draw (0.170*2, -\n/7+0.03) node{4};
}
 \foreach \n in {2}{
   \draw[dashed](0.170*2,-\n/7-0.01)--(0.585,-\n/7-0.01);
   \draw[dashed](1-0.245,-\n/7-0.01)--(1,-\n/7-0.01);
   \draw (1-0.245/2, -\n/7-0.03) node{0.245}; 
 \draw (0, -\n/7+0.06) node{5 notes};
}

 \foreach \n in {3,...,5}{
 \draw (1-0.075,-\n/7-0.01)--(1-0.075,-\n/7+0.01); \draw (1-0.075, -\n/7+0.03) node{5};
 \draw (0.170*3,-\n/7-0.01)--(0.170*3,-\n/7+0.01); \draw (0.170*3, -\n/7+0.03) node{6};
}
 \foreach \n in {3}{
   \draw[dashed](0,-\n/7-0.01)--(0.170*3,-\n/7-0.01);
   \draw[dashed](0.585,-\n/7-0.01)--(1-0.075,-\n/7-0.01);
 \draw (0, -\n/7+0.06) node{7 notes};
}
 \foreach \n in {3,4,5}{
   \draw (1-0.075/2-0.01, -\n/7-0.03) node{0.075}; 
}

 \foreach \n in {4,5}{
 \draw (0.095,-\n/7-0.01)--(0.095,-\n/7+0.01); \draw (0.095, -\n/7+0.03) node{7};
 \draw (0.680,-\n/7-0.01)--(0.680,-\n/7+0.01); \draw (0.680, -\n/7+0.03) node{8};
 \draw (0.265,-\n/7-0.01)--(0.265,-\n/7+0.01); \draw (0.265, -\n/7+0.03) node{9};
 \draw (0.850,-\n/7-0.01)--(0.850,-\n/7+0.01); \draw (0.850, -\n/7+0.03) node{10};
 \draw (0.435,-\n/7-0.01)--(0.435,-\n/7+0.01); \draw (0.435, -\n/7+0.03) node{11};
}
 \foreach \n in {4}{
   \draw[dashed](0,-\n/7-0.01)--(0.095,-\n/7-0.01);
   \draw[dashed](0.170,-\n/7-0.01)--(0.265,-\n/7-0.01);
   \draw[dashed](0.340,-\n/7-0.01)--(0.435,-\n/7-0.01);
   \draw[dashed](0.585,-\n/7-0.01)--(0.680,-\n/7-0.01);
   \draw[dashed](0.755,-\n/7-0.01)--(0.850,-\n/7-0.01);
 \draw (0.095/2, -\n/7-0.03) node{0.095}; 
 \draw (0, -\n/7+0.06) node{12 notes};
}

 \foreach \n in {5}{
 \draw (0.020,-\n/7-0.01)--(0.020,-\n/7+0.01); \draw (0.020, -\n/7+0.03) node{12};
 \draw (0.605,-\n/7-0.01)--(0.605,-\n/7+0.01); \draw (0.605, -\n/7+0.05) node{13};
 \draw (0.189,-\n/7-0.01)--(0.189,-\n/7+0.01); \draw (0.189, -\n/7+0.05) node{14};
 \draw (0.774,-\n/7-0.01)--(0.774,-\n/7+0.01); \draw (0.774, -\n/7+0.05) node{15};
 \draw (0.359,-\n/7-0.01)--(0.359,-\n/7+0.01); \draw (0.359, -\n/7+0.05) node{16};
 \draw[dashed](0.020,-\n/7-0.01)--(0.170,-\n/7-0.01);
 \draw[dashed](0.189,-\n/7-0.01)--(0.340,-\n/7-0.01);
 \draw[dashed](0.359,-\n/7-0.01)--(0.585,-\n/7-0.01);
 \draw[dashed](0.605,-\n/7-0.01)--(0.755,-\n/7-0.01);
 \draw[dashed](0.774,-\n/7-0.01)--(1,-\n/7-0.01);
 \draw (0.020, -\n/7-0.05) node{0.020}; 
 \draw (0, -\n/7+0.06) node{17 notes};
}
\end{tikzpicture}
\caption{Formation of Pythagorean temperament}\label{Pythagorean temperament}
\end{figure}
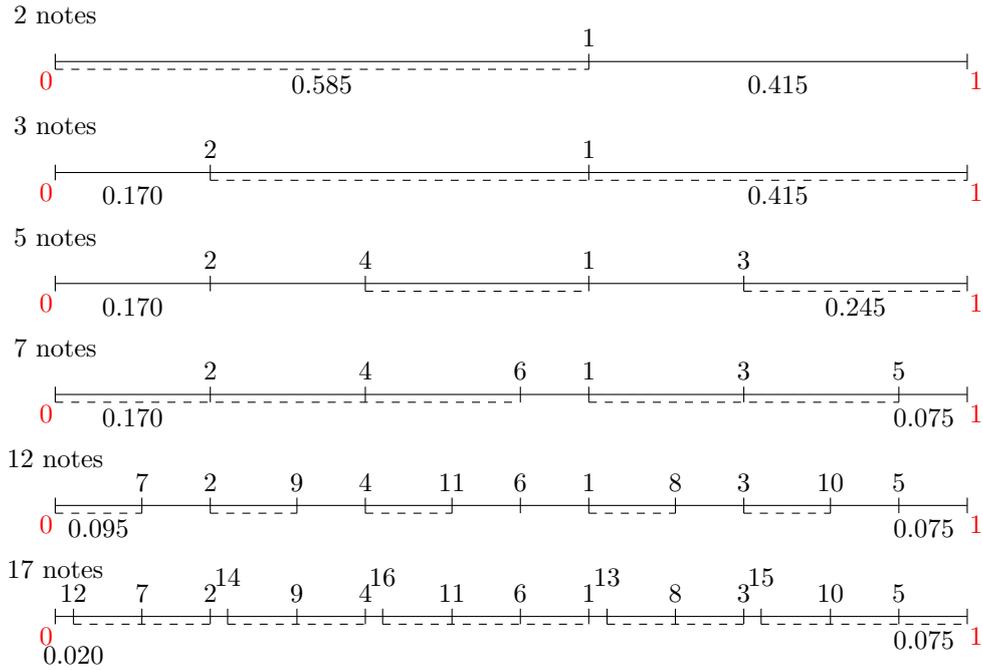

Arranging these values in ascending order and examining the differences between adjacent values, we find that the differences are limited to two or three values. 
Figure \ref{Pythagorean temperament} shows the divisions for two different values of difference, in order of increasing number of divisions. Temperaments with 5, 7, or 12 divisions of an octave are typical in traditional and contemporary music.

We can also see that the number of divisions with two difference values corresponds to the denominators of convergents obtained by expanding $\log_2 3$ into continued fractions and that the interval adjacent to 0 
differs from that adjacent to 1.
Furthermore, the difference in 
rank numbers of adjacent values equals one of the two types of interval numbers modulo $n$, with the sign differing based on whether the difference is a large or small interval. This relationship holds at both ends of an octave if the rank number corresponding to 0 and 1 is set to 0.

These observations lead naturally to the three distance theorem since it is understood that the above holds true even if $\log_2 3$ is replaced by an arbitrary, irrational number, given a continued fraction expansion.
This fact can also be interpreted as an expression of %
regularity of the image of a finite arithmetic progression with the initial term $0$ in the projection from the set of real numbers $\mathbb{R}$ to the unit interval $\mathbb{R} / \mathbb{Z} (= [0, 1))$.
Steinhaus formulated this as a conjecture only in the second half of the 20th century, and many researchers have provided various proofs since then.

Here we give a brief survey of the different approaches adopted by various authors to prove the three distance theorem and its dual, the three gap theorem.

Previous proofs of the three distance theorem have shown that for a given $\alpha$, the lengths of line segments  $\fracf{ i \alpha }$ dividing the unit interval are determined by $\min ( \fracf{ i \alpha } )$ and $\max ( 1 - \fracf{ i \alpha } )$. Throughout the 20th century, proofs were primarily based on elementary number theory concepts like continued fractions and combinatorics, as seen in works like 
\cite{sos1958distribution}, \cite{swierczkowski1958successive}, \cite{halton1965distribution}, \cite{slater1967gaps}, \cite{van1988three}. Alessandri and Berth{\'e} provided a survey paper \cite{alessandri1998three} summarizing these efforts.

In the 21st century, researchers adopted contemporary methodologies. Biringer and Schmidt \cite{biringer2008three} explored the Riemannian manifold, while Haynes, Koivusalo, Walton, and Sadun \cite{haynes2016gaps} examined CW-complexes. Marklof and Str{\"o}mbergsson \cite{marklof2017three} used the modular group with lattice structures, and Taha \cite{taha2017three} utilized properties of Farey fractions. Bockiting-Conrad, Kashina, Petersen, and Tenner \cite{Bockiting-Conrad2021} employed S{\'o}s permutations, and Nagata and Takei \cite{nagata2023} explored a two-dimensional version.

Our contribution lies in providing a new geometric perspective on the three distance theorem. We demonstrate how the unit interval is divided by continuously varying $\alpha$, presenting a two-dimensional map that visualizes this process. This approach not only simplifies the understanding of the theorem but also provides an intuitive visual representation of the relationships involved. Our method unifies the understanding of the theorem for both irrational and rational numbers, thereby offering a comprehensive insight into the behavior of points on the unit interval. 

Traditionally, proofs have discussed $\alpha$ as a fixed specific value. However, our approach visualizes the division of the unit interval for varying $\alpha$, showing how it changes for different $\alpha$ values. This progression enables a shift from understanding discrete sets of values (specific rational and irrational $\alpha$) to a comprehensive understanding over the continuous range (all real numbers $\alpha$ between 0 and 1). 
This method reinforces the classical results and opens new avenues for a comprehensive understanding of the theorem across the entire set of real numbers, including both rational and irrational numbers.
 Additionally, we offer a straightforward proof of the three gap theorem, which serves as a dual to the three distance theorem.

\section{Preparation}
Let $\lfloor x \rfloor\ (x \in \mathbb{R})$ denote the largest integer not exceeding $x$, 
and $\fracf{ x }$ the fractional part of $x \in \mathbb{R}$, i.e., $\fracf{ x } := x - \lfloor x \rfloor$.

The \emph{Farey sequence} \label{fareyn}$F_n$ for any positive integer $n$ consists of irreducible rational numbers 
$\dfrac{a}{b}$ with $0 \le a \le b\le n$ and $\gcd (a, b)=1$, arranged in increasing order.
Consecutive fractions $\bigl( \dfrac{a}{b}, \dfrac{c}{d} \bigr)$ in %
$F_n$ form a \emph{Farey pair}.
The \emph{mediant} of two {fractions $\dfrac{a}{b}$ and $\dfrac{c}{d}$} is defined as $\dfrac{a+c}{b+d}$.
In the Farey sequence $F_{n}$ for $n$,
interpolating the mediants of the Farey pairs where the sum of the denominators is  $n+1$ generates $F_{n+1}$,
the Farey sequence for $n+1$.
 An interval $\bigl( \dfrac{a}{b}, \dfrac{c}{d} \bigr)$ is called a \emph{Farey interval} if $\dfrac{a}{b}$ and $\dfrac{c}{d}$ are a Farey pair in some $F_n$.

Next, we define the \emph{unit interval integral partition} of order $n$ ($U_n$) as a two-dimensional region with $\alpha \in ( 0, 1 )$ on the %
horizontal axis
 and height $h \in \intervalco{0}{1}$ on the %
vertical axis,
 with lines $h = \fracf{ i\,\alpha },\ (0 < \alpha < 1,\ i = 1, \dots , n)$. 
Once  $\alpha$ is determined, the line segment representing %
$h \in \intervalco{0}{1}$ is divided into, at most $n+1$ subintervals.
For convenience, separated individual intervals are assumed to be semi-open intervals, closed to the left and open to the right.
 Figure \ref{n=3} shows the case $n=3$.

\begin{figure}[H]
\centering
\begin {tikzpicture}[x=7cm,y=7cm]
 \draw (1.03,-.02) node{$\alpha$};
 \draw (.99,-.03) node{1};
 \draw (0,-.03) node{0};
 \draw (-.02,1.03) node{$h$};
 \draw (-.03,.99) node{1};
 \draw (-.03,0) node{0};
 \draw (0, 0) rectangle (1,1);
 \draw (0, 0)--(1, 1);
 \draw (0, 0)--(1/2, 1);
 \draw (1/2, 0)--(1, 1);
 \draw [dashed](1/2, 0)--(1/2, 1);
   \draw (1/2, 0) node[below]{$\frac{1}{2}$};
 \draw (0, 0)--(1/3, 1);
 \draw (1/3, 0)--(2/3, 1);
 \draw (2/3, 0)--(1, 1);
 \draw [dashed](1/3, 0)--(1/3, 1);
   \draw (1/3, 0) node[below]{$\frac{1}{3}$};
 \draw [dashed](2/3, 0)--(2/3, 1);
   \draw (2/3, 0) node[below]{$\frac{2}{3}$};
\end{tikzpicture}
\caption{Unit interval integral partition of order 3 ($U_3$):
$h = \fracf{ i \alpha },\ (0 < \alpha < 1,\ i = 1, 2, 3)$}\label{n=3}
\end{figure}
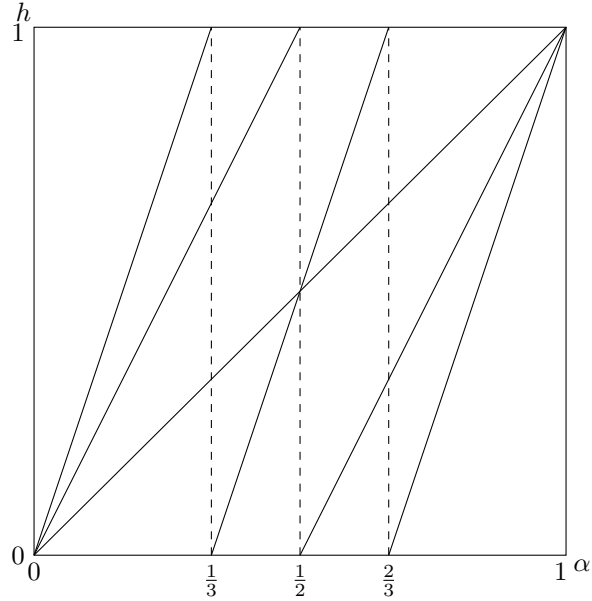

Similar ideas to our unit interval integral partition were introduced by Berstel and Pocchiola \cite{BerstelPocchiola1993} to investigate the number and characteristics of balanced words of length $n$. Subsequently, Yasutomi \cite {yasutomi1998}, Kamae and Takahashi \cite{kamaetakahashi2006} and Akiyama \cite{akiyama2021} discussed in geometric mapping of balanced words.
The unit interval to be divided (vertical axis) in our unit interval integral partition corresponds to the intercept of the Sturmian word in their mapping, whereas the mapping of each order is identical.

\section{The three distance theorem}

We state the three distance theorem by extending it to the case where $\alpha$ is real.

\begin{theorem}[Three distance theorem]
Let $0 < \alpha < 1$ be a %
real
 number and $n$ a positive integer.
The points $\fracf{ i\,\alpha }$, for $0 \le i \le n$, partition the unit interval into at most $n+1$ subintervals,
the lengths of which take at most three values, one being the sum of the other two.

More precisely,  
if $\alpha \in F_n$, then the subintervals of the unit interval divided by $\fracf{ i\,\alpha }$, for $0 \le i \le n$, all have the same length. 
Otherwise, let $\dfrac{a}{b} < \dfrac{c}{d}$ be the Farey pair surrounding $\alpha$.
Then, the unit interval is divided by the points %
 $\fracf{ i\,\alpha }$, for $0 \le i \le n$, into $n+1$ intervals which satisfy:
\begin{itemize}
\item $n+1-b$ of them have length $b\,\alpha - a$,
\item $n+1-d$ of them have length $c-d\,\alpha$,
\item $b+d-(n+1)$ of them have length $(b-d)\,\alpha +c-a$.%
\end{itemize}
The last case occurs when $b+d>n+1$.
\end{theorem}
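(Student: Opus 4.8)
The plan is to extract the three distances directly from the ``moving picture'' $U_n$. For a fixed $i$, the locus $h=\{i\alpha\}$ is a union of parallel chords of the unit square, the one over the strip $k/i<\alpha<(k+1)/i$ joining $(k/i,0)$ to $((k+1)/i,1)$; as $i$ runs over $1,\dots,n$, the bottom ends of all these chords are exactly the members of $F_n$ in $[0,1)$, so over the interior of a Farey interval of $F_n$ no chord meets the boundary and no two chords cross. First I would dispose of the case $\alpha\in F_n$: writing $\alpha=p/q$ in lowest terms with $q\le n$, the map $i\mapsto i\bmod q$ is onto $\mathbb{Z}/q$ as $i$ ranges over $0,\dots,n$, so the points $\{i\alpha\}$ are precisely $0,1/q,\dots,(q-1)/q$ and the unit interval is cut into $q\ (\le n+1)$ equal pieces.

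So assume $\alpha\notin F_n$, let $a/b<c/d$ be the Farey pair of $F_n$ surrounding $\alpha$, and set $I=(a/b,c/d)$. The key step is to prove \emph{rigidity over $I$}: for $1\le i\le n$ the integer $\lfloor i\alpha\rfloor$ is constant on $I$ (it can jump only where $\alpha$ meets a fraction of denominator $\le i\le n$, and a Farey interval contains no such fraction), hence each $v_i(\alpha):=\{i\alpha\}=i\alpha-\lfloor i\alpha\rfloor$ is affine on $I$; and $v_i=v_j$ ($i\ne j$) would force $\alpha$ to be a fraction of denominator $|i-j|\le n$, again impossible. Consequently the linear order of $v_0,\dots,v_n$ is constant on $I$, so the combinatorial type of the induced partition of $[0,1)$ is constant, and each of its $n+1$ gap-lengths is an affine function of $\alpha$ that extends continuously to $\overline I=[a/b,c/d]$.

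Next I would identify the lengths and their multiplicities by examining the two endpoints. On $I$ one has $\lfloor b\alpha\rfloor=a$ and $\lfloor d\alpha\rfloor=c-1$ (from $b\alpha\in(a,a+1/d)$, $d\alpha\in(c-1/b,c)$), so $L_1:=b\alpha-a=\{b\alpha\}$ and $L_2:=c-d\alpha=1-\{d\alpha\}$ are affine and positive on $I$, with end values $(L_1(a/b),L_1(c/d))=(0,1/d)$ and $(L_2(a/b),L_2(c/d))=(1/b,0)$. At $\alpha=a/b$ the $n+1$ points collapse onto the $b$ values $0,1/b,\dots,(b-1)/b$ (here $\gcd(a,b)=1$ and $b\le n+1$), so exactly $b$ of the gaps keep length $1/b$ and $n+1-b$ shrink to $0$; symmetrically $d$ survive of length $1/d$ and $n+1-d$ shrink to $0$ at $\alpha=c/d$. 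Since an affine function is pinned down by its two endpoint values, and each gap-length lies in $\{0,1/b\}$ at $a/b$ and in $\{0,1/d\}$ at $c/d$, each gap-length equals one of the four affine functions with such end values; $(0,0)$ is excluded by positivity, and the remaining three are exactly $L_1$, $L_2$ and $L_1+L_2$. Moreover the gaps of length $L_1$ are exactly the $n+1-b$ collapsing at $a/b$, those of length $L_2$ the $n+1-d$ collapsing at $c/d$, and these families are disjoint (a gap collapsing at both ends would be identically $0$). As $a/b,c/d$ are consecutive in $F_n$ their mediant has denominator $b+d>n$, so $b+d\ge n+1$; hence the remaining $b+d-(n+1)$ gaps, positive in number exactly when $b+d>n+1$, all have length $L_1+L_2$. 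That is the full statement.

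I expect the main obstacle to be making the rigidity step airtight: that ``combinatorial type'' is genuinely the right invariant — so that the whole partition deforms rigidly as $\alpha$ crosses $I$, its gap-lengths being honest affine functions with well-defined limits at both Farey endpoints — and that the multiplicity count survives the single coincidence $L_1=L_2$ (which occurs only at the mediant $\alpha=(a+c)/(b+d)$), where ``$n+1-b$ gaps of length $L_1$'' and ``$n+1-d$ gaps of length $L_2$'' still name disjoint families of gaps even though the two lengths have become numerically equal. Both points reduce to the one defining feature of a Farey interval: it contains no rational of denominator $\le n$.
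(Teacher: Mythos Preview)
Your proof is correct and follows essentially the same route as the paper's: both rest on the $U_n$ picture and the single observation that over the open Farey interval $(a/b,c/d)$ of $F_n$ the lines $h=\{i\alpha\}$ neither meet the boundary nor cross one another. The paper phrases the consequence geometrically---the rectangle $(a/b,c/d)\times[0,1)$ is sliced into left-based triangles, right-based triangles, and (when $b+d>n+1$) trapezoids---whereas you phrase the same consequence analytically: each gap-length is affine on $I$ with endpoint values in $\{0,1/b\}\times\{0,1/d\}$, which forces it to be one of $L_1$, $L_2$, $L_1+L_2$. Your four-affine-functions argument is exactly the triangle/trapezoid trichotomy in coordinates, and your collapse count at the two Farey endpoints makes the multiplicities $(n{+}1{-}b,\,n{+}1{-}d,\,b{+}d{-}n{-}1)$ more explicit than the paper, which simply asserts them ``as stated in the theorem.''
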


\begin{proof}
Each line segment inside $U_n$ can generally be expressed as $h = i\,\alpha - j\ (i = 1,\dots, n,\ 0 \le j < i,\ 0 < h < 1)$.
Thus, if two segments $h = i\,\alpha - j$ and $h = i'\,\alpha - j'$ intersect, their $\alpha$-coordinate is an irreducible fraction with a denominator less than $n$, belonging to $F_{n-1}$.

 If $\alpha \in F_n$, then the subintervals of the unit interval divided by $h=\{ i\,\alpha \}$, for $i = 1,\dots, n$, all have the same length. 
Because, 
if $\alpha = \dfrac{p}{q} \in F_n$, the indeterminate equation $p\,i - q\,j = k\ (0 < p < q \le n,\ k = 1,\dots, q-1)$
has solutions for each \(k\) in the range $0 < i \le n,\ 0 \le j < i$, so the set of intersections with each line segment $h = \{ i\,\alpha \}\ (i = 1,\dots, n,\ 0 < h < 1)$ and the line segment $\alpha = \dfrac{p}{q}\ (0 < h < 1)$ is $\bigl\{ \bigl( \dfrac{p}{q}, \dfrac{k}{q} \bigr) \bigm| k = 1,\dots, q-1 \bigr\}$.

Otherwise, %
consider a rectangular region with the corresponding Farey interval on the horizontal axis and the unit interval on the vertical axis $ \intervalobig{\dfrac{a}{b}}{\dfrac{c}{d}} \times (0,\ 1)$.
From the previous discussion, it is clear that all points $(\dfrac{a}{b}, \dfrac{k}{b})\ (k = 1,\dots, b-1)$ that divide the leftmost unit line segment of this Farey interval 
$\bigl\{ \dfrac{a}{b} \bigr\} \times (0,\ 1) $ into $b$ equal parts
 and all points $(\dfrac{c}{d}, \dfrac{k}{d})\ (k = 1,\dots, d-1)$ that 
 divide the rightmost unit line segment of this Farey interval 
$\bigl\{ \dfrac{c}{d} \bigr\} \times (0,\ 1)$ into $d$ equal parts,
 pass through one of the lines $h = \fracf{ i\,\alpha }\ (i = 1,\dots, n)$, and these lines do not intersect inside this rectangle.

Therefore, %
the rectangular region is divided into the following three types of regions by 
$h=\{ i\,\alpha \}\ (i = 1,\dots, n)$:
\begin{itemize}
\item triangles with a base of length $\dfrac{1}{b}$ at the left end and a vertex at the right end of %
the rectangular region,
\item triangles with a base of length $\dfrac{1}{d}$ at the right end and a vertex at the left end of %
the rectangular region,
\end{itemize}
and if $b+d > n+1$, then,
\begin{itemize}
\item trapezoids with one base of length $\dfrac{1}{b}$ at the left end and the other base of length $\dfrac{1}{d}$ at the right end of %
the rectangular region.
\end{itemize}
Given $\alpha$, we immediately see that the length of the subintervals separating the unit interval
$\{ \alpha \} \times (0,\ 1)$
are constant for each of the above types of regions and that the length of the section through the trapezoid is equal to the sum of the lengths of the sections through the two types of triangles above, and that the specific lengths of each are also as stated in the theorem.

Note that when $\alpha$ is the mediant of the Farey pair that surrounds it, the lengths of the subinterval delimited by $\fracf{ i\,\alpha }\ (0 \le i \le n)$ is one less since the lengths of the subinterval delimiting each triangle of the first and second type is equal.
\end{proof}

\begin{remark}
From the argument with $U_n$ in this proof, it follows that when $\alpha \notin F_n$,

\begin{gather}\label{n_max}
n \ge \max(b, d),\\
n+1 \le b+d.
\end{gather}

Next, when $\fracf{ 1\,\alpha }, \dots, \fracf{ n\,\alpha }$ are sorted in increasing order, we denote that $\fracf{ i\,\alpha }$ is followed by $\fracf{ j\,\alpha }$ as $\mathrm{suc}(i) = j$. The minimum value among them is $\fracf{ b\, \alpha }$, which we denote by $s$, and the maximum value is $\fracf{ d\, \alpha }$, which we denote by $1-t$, then the following holds.

\begin{align}
&n+1-b \mathrm{\ intervals\ of\ length\ } s = b\,\alpha - a, \mathrm{\  suc}(i) = i+b,\mathrm{\ for\ } 0 \le i \le n - b,\\
&n+1-d \mathrm{\ }\mathrm{intervals\ of\ length\ } t = c - d\,\alpha, \mathrm{\  suc}(i) = i-d,\mathrm{\ for\ } d \le i \le n,\\
&\begin{aligned}
b+d-(n+1) \mathrm{\ intervals\ of\ le} \mathrm{ngth\ } s+t = (b-d)\,\alpha +(c- a), \\
\mathrm{\  suc}(i) = i+b-d,\mathrm{\ for\ } n-b < i <d.
\end{aligned}\label{b+d-(n+1)}
\end{align}
\end{remark}

\section{The three gap theorem}

The three gap theorem, described next, is a dual to the three distance theorem. The decision parameter $n$ in the three distance theorem is replaced by a real number $\beta$.

\begin{theorem}[Three gap theorem]\label{three gap theorem}
Let $\alpha$, $\beta \in \bigl( 0, 1 \bigr)$ with $\alpha$ irrational.
The gaps between successive integers $i$ for which that $\fracf{i\,\alpha} < \beta$ have at most
three distinct values, one being the sum of the other two.

More precisely, if $\beta \ge \max(\alpha, 1-\alpha)$, the possible gaps are 1 and 2:
\begin{itemize}
\item the gap of 1 occurs with frequency $(2\,\beta-1) / \beta$,
\item the gap of 2 occurs with frequency $(1-\beta) / \beta$.
\end{itemize}
If $\beta < \max(\alpha, 1-\alpha)$, 
let $b$ and $d$ be the smallest positive integers satisfying
\begin{equation*}\label{def_a_b}
\fracf{ b\,\alpha } < \beta,\ \fracf{ d\,\alpha } > 1 - \beta.
\end{equation*}
Then, the possible gaps are $b$, $d$ and $b+d$.
Letting \( s = \{b\,\alpha\} \) and \( t = 1 - \{d\,\alpha \} \):
\begin{itemize}
\item the gap $b$ occurs with frequency $(\beta-s) / \beta$,
\item the gap $d$ occurs with frequency $(\beta-t) / \beta$,
\item the gap $b+d$ occurs with frequency $(s+t-\beta) / \beta$.
\end{itemize}
The last case arises when $s+t > \beta$.
\end{theorem}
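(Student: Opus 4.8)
The plan is to work with the rotation $R_\alpha\colon x\mapsto\{x+\alpha\}$ of $[0,1)$: an integer $i$ satisfies $\{i\alpha\}<\beta$ precisely when $R_\alpha^{\,i}(0)$ lies in $J:=[0,\beta)$, so the gaps between consecutive such $i$ are exactly the first-return times of $R_\alpha$ to $J$ along the orbit of $0$. Put $s:=\{b\alpha\}$ and $t:=1-\{d\alpha\}$, so $s<\beta$ and $t<\beta$ by the definitions of $b$ and $d$. First I would dispose of the case $\beta\ge\max(\alpha,1-\alpha)$ on its own: there $b=d=1$, $s=\alpha$, $t=1-\alpha$, and one reads off that the gap $1$ has frequency $(\beta-s)+(\beta-t)=2\beta-1$ and the gap $2$ frequency $s+t-\beta=1-\beta$. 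So assume $\beta<\max(\alpha,1-\alpha)$; then $b\ne d$, and since replacing $\alpha$ by $1-\alpha$ sends the set $\{i:\{i\alpha\}<\beta\}$ to the reflected set $\{i:\{i\alpha\}>1-\beta\}$ (hence preserves the multiset of gaps) and interchanges $b\leftrightarrow d$ and $s\leftrightarrow t$, I may assume $b<d$.

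Next I would record four elementary inequalities, each from the minimality of $b$ or $d$ by a one-line computation; the recurring point is that a positive integer $<d$ whose multiple of $\alpha$ has fractional part $>1-\beta$ is forbidden:
\begin{itemize}
\item $\{m\alpha\}\ge s$ for all $1\le m\le d$ (clear for $m\le b$; if some $b<m\le d$ had $\{m\alpha\}<s$, then $\{(m-b)\alpha\}=\{m\alpha\}+(1-s)>1-s>1-\beta$ with $0<m-b<d$);
\item $\{m\alpha\}\le 1-\beta$ for $1\le m<d$ (minimality of $d$); hence $s\le 1-\beta$, and with $t<\beta$ also $s+t<1$;
\item $\beta\le s+t$ (else $\{(d-b)\alpha\}=1-(s+t)>1-\beta$ with $0<d-b<d$);
\item $\{m'\alpha\}\ge s+t$ for $1\le m'<b$ (else $s<\{m'\alpha\}<s+t$ by the first item, so $\{(b-m')\alpha\}=1-(\{m'\alpha\}-s)>1-t>1-\beta$ with $0<b-m'<d$).
\end{itemize}
The first two items give $0\le\beta-s\le t\le\beta$, so $J$ is partitioned as $[0,\beta-s)\sqcup[\beta-s,t)\sqcup[t,\beta)$, the middle block being empty exactly when $\beta=s+t$.

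The heart of the proof is the claim that, for an active $i$ with $x:=\{i\alpha\}$, the next active integer is $i+b$ if $x\in[0,\beta-s)$, is $i+b+d$ if $x\in[\beta-s,t)$, and is $i+d$ if $x\in[t,\beta)$. That the indicated integer is itself active is one step of $R_\alpha$: e.g.\ $R_\alpha^{b}(x)=\{x+s\}=x+s\in[s,\beta)$ when $x\in[0,\beta-s)$, and $R_\alpha^{b+d}(x)=\{x+s-t\}=x+s-t\in[\beta-t,s)\subseteq J$ when $x\in[\beta-s,t)$. That no integer strictly in between is active amounts to showing that the relevant block of $J$ is disjoint from $R_\alpha^{-m}(J)$ for each intervening $m$, where $R_\alpha^{-m}(J)$ is the interval of length $\beta$ whose left endpoint is $1-\{m\alpha\}$ reduced modulo $1$; one feeds the four inequalities into the ranges $1\le m<b$, $b\le m<d$, $m=d$ and $d<m<b+d$ and checks disjointness case by case. \textbf{I expect this disjointness bookkeeping to be the main obstacle}, the delicate range being $d<m<b+d$: writing $m=d+m'$ forces $\{m\alpha\}=\{m'\alpha\}-t$ and hence a wrap-around of $R_\alpha^{-m}(J)$, and it is precisely here that the non-obvious fourth inequality $\{m'\alpha\}\ge s+t$ is needed. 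Granting the claim, the conclusion is immediate: since the sequence $\{i\alpha\}$, $i\ge0$, is equidistributed modulo $1$, the density of active $i$ with $\{i\alpha\}$ in a given sub-block of $J$ equals the length of that block, so the gaps $b$, $b+d$, $d$ occur with frequencies $\beta-s$, $s+t-\beta$, $\beta-t$ (summing to $\beta$); all three occur unless $\beta=s+t$, and $b+d$ is the sum of the other two gaps $b$ and $d$, as required.
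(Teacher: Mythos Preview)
Your proposal is correct and follows essentially the same route as the paper: both arguments partition $[0,\beta)$ into the blocks $[0,\beta-s)$, $[\beta-s,t)$, $[t,\beta)$, show that the next active index after $i$ is $i+b$, $i+b+d$, or $i+d$ according to which block contains $\{i\alpha\}$ (ruling out earlier returns via the minimality of $b$ and $d$), and then read off the frequencies from equidistribution. The only organisational difference is your preliminary reduction to $b<d$ via the $\alpha\mapsto 1-\alpha$ symmetry and your recording of the four auxiliary inequalities up front, whereas the paper treats both orderings of $b,d$ simultaneously and folds the analogous inequalities into the individual case checks.
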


\begin{proof}

\noindent{\bf Case 1:} 
For \(\beta \ge \max(\alpha, 1-\alpha)\), since both \(\fracf{0\alpha}\) and \(\fracf{1\alpha}\) are less than \(\beta\), 1 is one of the gaps. Given that \(0 < 1-\beta \le \min(\alpha, 1-\alpha)\), there are clearly two possible gaps: 1 and 2.

In detail, if \(\fracf{i\,\alpha} \in [0, \beta - \alpha) \cup [1 - \alpha, \beta)\), the subsequent gap is 1. If \(\fracf{i\,\alpha} \in [\beta - \alpha, 1 - \alpha)\), the next gap is 2. From number theory, we know that the numbers \(\fracf{i\,\alpha} (i \ge 0)\) are uniformly distributed over the interval [0, 1). Hence, the frequency of each gap matches the length of the associated interval.

\noindent{\bf Case 2:} 
If \(\beta < \max(\alpha, 1-\alpha)\), clearly, \(b \neq d\). Hence, we have
\begin{equation}\label{beta_max}
\beta > \max \{ s, t \}.
\end{equation}
It's straightforward to see that \(\beta \le \fracf{(b-d)\,\alpha}\). If \(\fracf{(b-d)\,\alpha} < \beta\), then \(\fracf{(d-b)\,\alpha} > 1-\beta\), leading to a contradiction either when \(b>d\) or \(b<d\). Given that \(0 < s + t < 1\), we have:
\begin{equation}
\beta \le \fracf{(b-d)\alpha} = s+t.
\end{equation}

Thus, for \(\fracf{i\,\alpha} \in [0, \beta)\), the gaps depend on the value of \(\fracf{i\,\alpha}\).

\begin{equation}
\mathrm{If\ }\fracf{i\,\alpha} \in [0, \beta - s)\mathrm{,\ then\ the\ gap\ after\ }i\mathrm{\ is\ }b,
\end{equation}
because \(\fracf{(i+b)\,\alpha } \in [ s, \beta )\). 
If there exists an \( e \in \mathbb{Z} \) such that \(0 < e < b\) and \(\fracf{ (i+e)\,\alpha } \in [ 0, \beta)\),
then considering \(\{ b\,\alpha \} = s, \{ e\,\alpha \} \in [0, \beta)\) or
\(\fracf{(b-e)\,\alpha} \in [0, \beta)\) contradicts the definition of \(b\).

\begin{equation}
\mathrm{If\ }\fracf{i\,\alpha} \in [t, \beta)\mathrm{,\ then\ the\ gap\ after\ }i\mathrm{\ is\ }d,
\end{equation}
because \(\fracf{(i+d)\,\alpha } \in [ 0, \beta-t )\). 
If there exists an \( e \in \mathbb{Z} \) such that \(0 < e < d\) and \(\fracf{ (i+e)\,\alpha } \in [ 0, \beta)\),
then considering \(\{ d\,\alpha \} = 1-t, \{ e\,\alpha \} \in [1-\beta, 1)\) or
\(\fracf{(d-e)\,\alpha} \in [1-\beta, 1)\) contradicts the definition of \(d\).

\begin{equation}\label{beta-s_t}
\mathrm{If\ }\fracf{i\,\alpha} \in [\beta-s, t)\mathrm{,\ then\ the\ gap\ after\ }i\mathrm{\ is\ }b+d,
\end{equation}
Because, first of all, \( \{(i+b+d)\,\alpha \} \in [\beta-t, s) \). 
There does not exist an \( e \in \mathbb{Z} \) such that \( 0 < e < b+d \) and \( \{(i+e)\,\alpha\} \in [0, \beta) \). 
Let \( u = \{i\alpha\} \in [\beta - s, t) \), and first consider the range \( [u, \beta) \). 
For \( 0 < e < b \), \( \{(i+e)\,\alpha\} \notin [u, u+\beta) \). 
Also, since \( \{(i+b)\,\alpha\} = u+s \in [\beta, s+t) \), 
for \( b < e < b+d \), \( \{(i+e)\alpha\} \notin [s+t-\beta, \beta) \). 
For the range \( [0, u) \), it can be shown in the same way by swapping \(b\) and \(d\).

As in Case 1, the frequency of each gap is equal to the length of the corresponding interval.
\end{proof}

\begin{remark}
Expressions (\ref{beta_max}) through (\ref{beta-s_t}) in the three gap theorem are duals of expressions (\ref{n_max}) through (\ref{b+d-(n+1)}) in the three distance theorem, respectively.
These results align with the dual nature of the expressions in the three distance and three gap theorems, confirming their interconnectedness.
\end{remark}

\bibliographystyle{amsplain}

\end{document}